\newcommand{\kbar}{\overline{k}}
\newcommand{\Q}{\mathbb{Q}}
\newcommand{\Z}{\mathbb{Z}}
\newcommand{\frakp}{\mathfrak{p}}
\renewcommand{\phi}{\varphi}
\newcommand{\eps}{\varepsilon}
\newcommand{\frakq}{\mathfrak{q}}
\newcommand{\frakr}{\mathfrak{r}}
\newcommand{\frakc}{\mathfrak{c}}
\DeclareMathOperator{\Gal}{Gal}
\DeclareMathOperator{\End}{End}
\DeclareMathOperator{\Isog}{Isog_{\mathrm{pp}}}
\DeclareMathOperator{\Cl}{Cl}
\DeclareMathOperator{\cyc}{cyc}
\DeclareMathOperator{\tor}{tor}
\DeclareMathOperator{\inert}{inert}
\newcommand{\splitker}[1]{\ker(#1)_{\mathrm{split}}}
\newtheorem{prop}{Proposition}[section]
\newtheorem{thm}[prop]{Theorem}
\newtheorem{cor}[prop]{Corollary}
\theoremstyle{definition}
\newtheorem{defn}[prop]{Definition}
\newtheorem{ex}[prop]{Example}
\newtheorem{rem}[prop]{Remark}
\newtheorem{algorithm}[prop]{Algorithm}
\title[Spanning isogeny classes of PPAS's with RM]{Spanning isogeny
  classes of principally polarized abelian surfaces with RM}
\author{Jean Kieffer}
\date{\today}
\begin{document}

\maketitle

\begin{abstract}
  We describe how isogenies between principally polarized abelian surfaces
  (PPAS) with real multiplication (RM) can be decomposed into elementary
  isogeny types. This leads to a strategy for enumerating the isogeny class of
  a given PPAS with~RM. This note is part of an ongoing work with R.~van
  Bommel, S.~Chidambaram, and E.~Costa and is not indended for separate
  publication.
\end{abstract}

\section{Isogenies and real endomorphisms}

\subsection{Notation}

We keep the same setting as~\cite[§2]{g2classes}. $k$ is a field of
characteristic zero (e.g.~a number field). We identify group schemes over~$k$
with the set of their $\kbar$-points endowed with an action of~$\Gal(\kbar/k)$.

We only consider abelian varieties, polarizations, isogenies and endomorphisms
that are defined over~$k$. Two isogenies are declared isomorphic if they have
the same domain and differ by an isomorphism on their targets.

If~$(A,\lambda_A)$ is a principally polarized abelian variety (PPAV) over~$k$,
then the \emph{Rosati involution} on~$\End_k(A)$ is
$\dagger:\phi\mapsto \lambda_A^{-1}\phi^\vee \lambda_A$, where~$\phi^\vee$
denotes the dual of~$\phi$. Note that the Rosati involution depends on the
choice of~$\lambda_A$.

An endomorphism~$\beta$ of~$A$ is called \emph{symmetric}
if~$\beta^\dagger = \beta$. The roots of its characteristic polynomial (a
polynomial of degree $\dim A$) are then real numbers. We say that~$\beta$ is
\emph{totally positive} if these roots are positive. The set of symmetric
(resp.~symmetric and totally positive) endomorphisms is denoted by
$\End_k(A)^\dagger$
(resp.~$\End_k(A)^\dagger_{>0}$). If~$\beta\in \End_k(A)^\dagger_{>0}$, then
the \emph{$\beta$-torsion subgroup} $A[\beta]$ carries a non-degenerate Weil
pairing.

\subsection{Correspondence between isogenies and endomorphisms}

We recall \cite[Lemma 2.1]{g2classes}, which summarizes results of
Mumford~\cite{mumford}.

\begin{prop}
  \label{prop:mumford}
  Let~$(A,\lambda_A)$ be a PPAV over any field~$k$. Then there is a one-to-one
  correspondence between
  \begin{itemize}
  \item isomorphism classes of isogenies~$A\to A'$ where~$A'$ is also endowed
    with a principal polarization~$\lambda_{A'}$, and
  \item pairs $(\beta,G)$ where~$\beta\in \End(A)$ is symmetric and totally
    positive, and~$G$ is a subgroup of~$A[\beta]$ which is maximal isotropic
    with respect to the Weil pairing.
  \end{itemize}
  The correspondence is as follows. Given an isogeny $\phi:A\to A'$, we
  \begin{equation}
    \label{eq:mumford}
    \beta = \lambda_A^{-1} \circ \phi^\vee \circ \lambda_{A'} \circ \phi
  \end{equation}
  and~$G = \ker\phi$. Given a pair $(\beta,G)$, we let $\phi:A \to A/G$ be the
  quotient isogeny, and endow~$A' = A/G$ with the unique principal polarization
  $\lambda_{A'}$ such that \eqref{eq:mumford} holds.
\end{prop}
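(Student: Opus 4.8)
The plan is to translate everything into the language of polarizations and then invoke Mumford's theory of descent of line bundles along isogenies. For $\beta\in\End(A)$ put $\mu_\beta:=\lambda_A\circ\beta\colon A\to A^\vee$. Since $\lambda_A$ is an isomorphism and $\lambda_A^\vee=\lambda_A$, taking duals gives $\mu_\beta^\vee=\beta^\vee\lambda_A$, so $\mu_\beta$ is a symmetric homomorphism exactly when $\beta^\dagger=\beta$; using the identification of symmetric homomorphisms $A\to A^\vee$ with algebraic equivalence classes of line bundles, together with the positivity properties of the Rosati involution attached to $\lambda_A$, the symmetric $\mu_\beta$ is a polarization exactly when $\beta$ is moreover totally positive. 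Furthermore $\ker\mu_\beta=\ker\beta=A[\beta]$, and the Weil pairing carried by $A[\beta]$ for such a $\beta$ is by definition the pairing on $\ker\mu_\beta$ induced by the polarization $\mu_\beta$. Finally, \eqref{eq:mumford} is exactly the identity $\mu_\beta=\phi^\vee\lambda_{A'}\phi=\phi^*\lambda_{A'}$. So the proposition is equivalent to the assertion that $\phi\mapsto(\phi^*\lambda_{A'},\,\ker\phi)$ is a bijection from isomorphism classes of pairs $(\phi\colon A\to A',\lambda_{A'})$ with $\lambda_{A'}$ a principal polarization, onto pairs $(\mu,G)$ with $\mu$ a polarization of $A$ and $G\subseteq\ker\mu$ maximal isotropic.

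For the forward direction, given such a $\phi$ and $\lambda_{A'}$, the element $\beta=\lambda_A^{-1}\phi^\vee\lambda_{A'}\phi$ is a genuine endomorphism of $A$ because $\lambda_A$ is invertible, and $\mu_\beta=\lambda_A\beta=\phi^*\lambda_{A'}$ is a polarization, the pullback of an ample line bundle along the finite morphism $\phi$ being ample; hence, by the dictionary above, $\beta$ is symmetric and totally positive. Plainly $G:=\ker\phi\subseteq\ker\mu_\beta=A[\beta]$, and the fact that $G$ is \emph{maximal} isotropic is precisely the part of Mumford's descent theorem saying that the pullback along an isogeny of a principal polarization has maximal isotropic kernel. (Alternatively: $|A[\beta]|=\deg\beta=(\deg\phi)^2=|G|^2$, which together with isotropy of $G$ forces $G=G^\perp$.)

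For the reverse direction, given $(\beta,G)$, let $\phi\colon A\to A':=A/G$ be the quotient isogeny. By the dictionary $\mu_\beta=\lambda_A\beta$ is a polarization with $\ker\mu_\beta=A[\beta]\supseteq G$, and $G$ is isotropic; so Mumford's descent theorem yields a polarization $\lambda_{A'}$ on $A'$ with $\phi^*\lambda_{A'}=\mu_\beta$, i.e.\ satisfying \eqref{eq:mumford}, and it is principal because $\deg\lambda_{A'}=\deg\mu_\beta/(\deg\phi)^2=|\ker\mu_\beta|/|G|^2=1$ when $G$ is maximal isotropic. Uniqueness of $\lambda_{A'}$ is formal: if two polarizations of $A'$ pull back to $\mu_\beta$, their difference $\chi$ satisfies $\phi^\vee\chi\phi=0$; the image of $\chi\phi$ is then an abelian subvariety of the finite group scheme $\ker\phi^\vee$, hence trivial, so $\chi\phi=0$, and $\chi=0$ because $\phi$ is an epimorphism.

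It remains to check that the two constructions are mutually inverse. Starting from $(\phi,\lambda_{A'})$: the first isomorphism theorem identifies $A/\ker\phi$ with $A'$ compatibly with the quotient map, and $\lambda_{A'}$ pulls back to $\mu_\beta$, so by the uniqueness just established it is the polarization produced by the reverse construction. Starting from $(\beta,G)$: the forward construction applied to the $\phi$ and $\lambda_{A'}$ we built returns $\lambda_A^{-1}(\phi^*\lambda_{A'})=\lambda_A^{-1}\mu_\beta=\beta$ and $\ker\phi=G$. The only genuinely non-formal inputs are Mumford's descent theorem for line bundles along isogenies — especially the correspondence between maximal isotropic subgroups of $\ker\mu$ and principal descents of $\mu$ — and the positivity of the Rosati form; I expect the main difficulty to be purely bookkeeping, namely keeping the various dual maps, the canonical identification $A^{\vee\vee}=A$, and the two incarnations of the Weil pairing consistent.
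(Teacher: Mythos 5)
Your argument is correct and is exactly the route the paper intends: the paper offers no proof of this proposition, simply recalling it from \cite[Lemma 2.1]{g2classes} as a summary of Mumford's results on descent of polarizations along isogenies, which is precisely the machinery you invoke (Rosati positivity to identify symmetric totally positive $\beta$ with polarizations $\lambda_A\beta$, descent along $A\to A/G$ for isotropic $G$, and the degree count showing principality is equivalent to maximal isotropy). The only caveat worth recording is that your parenthetical counting argument $\#A[\beta]=\deg\beta=(\deg\phi)^2$ presupposes that $\beta$ is separable, so over a field of positive characteristic (the proposition is stated for ``any field'') it should be phrased with group-scheme orders rather than point counts --- but your primary argument via the descent theorem does not rely on this.
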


We have $\deg(\beta) = \deg(\phi)^2$. We say that $\phi$ is a
\emph{$\beta$-isogeny}. Note that with this terminology, a ``2-step
$\ell$-isogeny'' as in~\cite{g2classes} is an $\ell^2$-isogeny.

\subsection{Isogeny classes}

Before using \cref{prop:mumford} to enumerate isogeny classes, we must specify
exactly what classes we are looking at.

\begin{defn}
  Let~$(A,\lambda_A)$ be a PPAV over~$k$. We define
  \begin{displaymath}
    \Isog(A) = \{(A',\lambda_{A'})\text{ PPAV over } k: A' \text{ and } A \text{ are isogenous}\}/\sim.
  \end{displaymath}
  In other words, $\Isog(A)$ consists of all isomorphism classes of
  \emph{abelian varieties endowed with a principal polarization} that are
  isogenous to~$A$, without any a priori compatibility between isogenies and
  polarizations.
\end{defn}

\begin{ex}
  Assume that the only abelian variety over~$k$ that is isogenous to~$A$ (as a
  nonpolarized abelian variety) is~$A$ itself. Then determining $\Isog(A)$
  means determining the set of possible principal polarizations on~$A$ up to
  isomorphism.
\end{ex}

We would like to compute $\Isog(A)$ when~$k$ is a number field and~$A$ is a
given abelian surface with real multiplication. (It is a finite set by
Faltings~\cite{faltings}.) The purpose of this note is to present the first step
in such a computation: we explain how any isogeny between such abelian surfaces
can be decomposed into isogenies of ``elementary'' types, using
\cref{prop:mumford} as our main tool.

\subsection{Assumptions in the rest of this note}
\label{subsec:assumptions}

$F = \End_k(A)^\dagger\otimes\Q$ is either~$\Q\times\Q$ or a real quadratic
field. (In the former case,~$A$ is isogenous over~$k$ to a product of
non-isogenous elliptic curves.) $\Z_F$ is the maximal order of~$F$
(where $\Z_F = \Z\times \Z$ if $F = \Q\times\Q$) and $\Cl(F)$ (resp.~$\Cl^+(F)$) is the
class group (resp.~narrow class group) of~$\Z_F$ (both trivial if
$F = \Q\times\Q$).

The identification $F = \End_k(A)^\dagger\otimes\Q$ propagates through
isogenies: if~$\phi:A\to A'$ then there exists a unique bijection
$F\to\End_k(A')^\dagger\otimes\Q$ such that~$\phi$ is compatible with these RM
structures. This allows us to identify $F = \End_k(A')^\dagger\otimes\Q$ as well.

\begin{rem}
  Actually, the content of this note may also be useful if $\End_k(A)^\dagger$
  is more complicated than a quadratic field, i.e.~if the endomorphism algebra
  of~$A$ is QM, $M_2(\Q)$, or $M_2(\mathrm{CM})$: even in such a case, for
  every $\alpha \in \End_k(A)^\dagger_{>0}$, $\Q(\alpha)$ is either~$\Q$,
  $\Q\times\Q$ or a real quadratic fields, and the methods of this note apply
  to decompose an~$\alpha$-isogeny into more elementary ones.
\end{rem}

\subsection{Acknowledgements} I thank Raymond van Bommel for his comments on an
earlier version of this note. 

\section{Reduction to maximal RM}

If~$A$ has RM by a non-maximal order, then it is isogenous to another abelian
surface with real multiplication by~$\Z_F$.

\begin{prop}
  \label{prop:max-order}
  Assume that $A$ has RM by $\mathcal{O}\subset \Z_F$, and let~$c$ be the
  conductor of $\mathcal{O}$ in~$\Z_F$. Let~$c = \prod_{i=1}^r \ell_i$ be a
  decomposition of~$c$ as a product of prime numbers. Then there exists an
  isogeny $\phi: A\to B$ defined over~$k$ such that~$B$ has RM by $\Z_F$, and
  $\phi$ decomposes as $ \phi = \phi_1\circ\cdots \circ \phi_r$, where~$\phi_i$
  is an $\ell_i$-isogeny for every $1\leq i\leq r$.
\end{prop}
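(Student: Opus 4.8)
The plan is to induct on $r$, so it suffices to treat the case $c = \ell$ prime (and then to observe that the order $\mathcal{O}_i \subset \Z_F$ of conductor $c/\ell_i$ at the first step still has conductor divisible by the remaining primes, allowing the induction to proceed). So assume $A$ has RM by an order $\mathcal{O}$ of prime conductor $\ell$ in $\Z_F$, i.e.\ $\mathcal{O} = \Z + \ell\Z_F$. The idea is to produce a suitable totally positive symmetric endomorphism $\beta$ of degree $\ell^2$ and apply \cref{prop:mumford}. First I would look for $\beta$ inside $\mathcal{O} \subset \End_k(A)^\dagger$ itself: since $\mathcal{O}$ has index $\ell$ in $\Z_F$, the quotient $\Z_F/\mathcal{O}$ is killed by $\ell$, and one can choose an element $\alpha \in \Z_F$ whose image generates this quotient; then $\beta := $ (an appropriate totally positive element of $\mathcal{O}$ of norm $\ell^2$, e.g.\ built from $\ell$ together with $\alpha$) should do. Concretely, when $\ell$ is inert or ramified in $F$ one expects $\beta$ to be $\ell$ times a unit, and the relevant group $G$ to be the kernel of $A \to A/\ell$-type quotient cut out by the non-maximality; when $\ell$ splits one instead picks $\beta$ corresponding to one of the primes above $\ell$.

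The key steps, in order: (1) reduce to $c=\ell$ prime by induction, checking that the target $B$ at each stage has RM by the order of conductor $c/\ell$; (2) exhibit an explicit symmetric, totally positive $\beta \in \End_k(A)$ with $\deg\beta = \ell^2$ whose existence is forced by the inclusion $\mathcal{O} \subsetneq \Z_F$ — here the natural candidate is obtained from a $\Z$-basis $1, \gamma$ of $\Z_F$ with $\mathcal{O} = \Z + \ell\gamma\Z$, taking $\beta = \ell \cdot u$ for a totally positive unit $u$, or a product of a prime-above-$\ell$ endomorphism with its Rosati conjugate; (3) verify that $A[\beta]$ admits a maximal isotropic subgroup $G$ defined over $k$ for the Weil pairing — since $\beta$ is a real (symmetric) endomorphism, $A[\beta]$ is an $\mathcal{O}/\beta$-module and the Weil pairing is compatible with the RM action, so isotropy can be arranged using the RM structure (e.g.\ $A[\ell]$ as an $\Z_F/\ell$-module decomposes and a maximal isotropic is a suitable $\Z_F$-submodule); (4) apply \cref{prop:mumford} to get $\phi: A \to A' = A/G$ with a principal polarization, and check that $A'$ has RM by $\Z_F$: the endomorphism $\gamma$ does not descend to $A$ but $\beta^{-1}\circ(\text{something})$, i.e.\ the isogeny $\phi$ is designed precisely so that $\phi \circ \gamma \circ \phi^{-1}$-type conjugation lands in $\End_k(A')$, enlarging the RM order to $\Z_F$.

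I expect step (3), arranging the maximal isotropic subgroup $G$ to be Galois-stable (defined over $k$), together with step (4), confirming that $A/G$ has RM by the \emph{full} ring $\Z_F$ rather than some intermediate order, to be the main obstacles. For (3) the point is that the Weil pairing on $A[\beta]$ need not a priori be compatible with the RM action in a way that makes an $\Z_F$-submodule automatically isotropic; one must use that $\beta$ is symmetric (Rosati-fixed) so that the RM action is by Rosati-symmetric endomorphisms, forcing the pairing to be $\Z_F$-sesquilinear and hence making $\Z_F$-submodules of the right size isotropic. For (4) the cleanest argument is to track the endomorphism ring through the isogeny: $\End_k(A') \otimes \Q = \End_k(A)\otimes\Q$ still contains $F$, and $\End_k(A')^\dagger$ contains $\phi_* \mathcal{O} \phi^*$ together with an extra integral element coming from the fact that $\beta$ (hence $\ell$) now factors through $G$, and a short index computation shows the resulting order is all of $\Z_F$. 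The degree bookkeeping $\deg\phi = \ell$ (so $\deg\beta = \ell^2$) should then follow from $[\Z_F : \mathcal{O}] = \ell$.
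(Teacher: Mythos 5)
Your overall plan (reduce to the correspondence of \cref{prop:mumford} by producing a suitable $\beta$ and a $k$-rational maximal isotropic subgroup $G$, then check that $\Z_F$ acts on the quotient) is the right shape, but the one ingredient that actually constitutes the proof --- a concrete choice of $G$ --- is never produced. You explicitly flag steps (3) and (4) as ``the main obstacles'' and leave them open, whereas they evaporate once the kernel is named. The paper writes $\Z_F=\Z[x]$ with $\mathcal{O}=\Z[cx]$ and takes $G=A[c]\cap A[cx]$, i.e.\ the $\mathfrak{f}$-torsion for $\mathfrak{f}=c\Z_F$ the conductor ideal. This $G$ is automatically Galois-stable (it is an intersection of kernels of $k$-rational endomorphisms, since $cx\in\mathcal{O}=\End_k(A)^\dagger$), is abstractly $(\Z/c\Z)^2$ and maximal isotropic in $A[c]$, makes $x$ descend to $B=A/G$ so that $B$ has RM by $\Z_F$, and filters into a chain giving the $\ell_i$-isogenies. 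No induction on the primes of $c$ is needed, and in particular no case analysis on the splitting of $\ell$ in $F$: the single element $\beta=c$ works.

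Several of the concrete suggestions you do make would lead you astray. First, the degree bookkeeping is off: by the paper's conventions an $\ell$-isogeny of abelian surfaces is a $\beta$-isogeny with $\beta=\ell\in\Z$, so $\deg\beta=\ell^4$, $\deg\phi=\ell^2$, and $\ker\phi\simeq(\Z/\ell\Z)^2$ is maximal isotropic in $A[\ell]\simeq(\Z/\ell\Z)^4$ --- not $\deg\phi=\ell$, $\deg\beta=\ell^2$. Second, in the split case you propose taking $\beta$ to generate a prime $\frakp$ above $\ell$; that would produce a $\frakp$-isogeny rather than the $\ell$-isogeny the statement demands, such a totally positive generator need not exist when $\Cl^+(F)$ is nontrivial, and in any case the primes of $\Z_F$ above $\ell$ are not available as endomorphisms of $A$ since only $\mathcal{O}$ acts. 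Third, your argument for isotropy treats $A[\ell]$ as a $\Z_F/\ell$-module, which is circular: $\Z_F$ does not act on $A$ yet (that is what is being proved), only $\mathcal{O}$ does, and the relevant module structure is over $\mathcal{O}/\mathfrak{f}\simeq\Z/c\Z$. So while the strategy is recognizably the intended one, the proof as written has a genuine gap at its core.
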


\begin{proof}
  This is ``well known''. We can always find $x\in \Z_F$ such that
  $\Z_F = \Z[x]$ and $\mathcal{O} = \Z[cx]$. One can take $G = A[c] \cap A[cx]$
  as the kernel of~$\phi$. Since~$G\simeq (\Z/c\Z)^2$ as an abstract group
  and~$G$ is maximal isotropic in $A[c]$, we can indeed decompose $\phi$ as
  claimed.
\end{proof}

\begin{cor}
  \label{cor:max-rm-decomposition}
  We can enumerate $\Isog(A)$ as follows:
  \begin{enumerate}
  \item Find $(B,\lambda_{B})\in \Isog(A)$ such that $B$ has RM
    by~$\Z_F$ as in \cref{prop:max-order};
  \item \label{step:max-rm} Starting from~$B$, enumerate all
    $(B',\lambda_{B'})\in \Isog(B)$ such that $B'$ also has RM by~$\Z_F$;
  \item From each $B'$ found in step~\eqref{step:max-rm}, look for sequences of
    $\ell$-isogenies where~$\ell\in\Z$ are primes (possibly distinct ones), and
    add the isogenous p.p.~abelian surfaces we find to $\Isog(A)$.
  \end{enumerate}
\end{cor}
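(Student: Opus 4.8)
The plan is to show that the procedure of \cref{cor:max-rm-decomposition} is both \emph{sound} — every p.p.~abelian surface it outputs lies in $\Isog(A)$ — and \emph{complete} — it outputs every element of $\Isog(A)$. The reduction that makes step~(1) legitimate is that the isogeny $\phi\colon A\to B$ it produces places $A$ and $B$ in the same isogeny class of (unpolarized) abelian varieties, and $\Isog$ depends only on that class, so $\Isog(A)=\Isog(B)$; hence it suffices to enumerate $\Isog(B)$ by steps~(2)–(3). Soundness is then immediate: any $(B',\lambda_{B'})$ found in step~(2) lies in $\Isog(B)$ by construction, and a chain of $\ell$-isogenies out of such a $B'$ produces only p.p.~abelian surfaces whose underlying variety is isogenous to $B'$, hence to $B$, hence again in $\Isog(B)$.

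For completeness I would fix an arbitrary $(C,\lambda_C)\in\Isog(B)$ and unwind it with \cref{prop:max-order}. By §\ref{subsec:assumptions}, $F=\End_k(C)^\dagger\otimes\Q$, so $\mathcal{O}_C:=\End_k(C)^\dagger$ is an order in $F$, necessarily contained in $\Z_F$. Applying \cref{prop:max-order} with $A$ replaced by $(C,\lambda_C)$ and $\mathcal{O}=\mathcal{O}_C$, and writing the conductor of $\mathcal{O}_C$ as a product of primes $c=\prod_{i=1}^r\ell_i$, yields an isogeny $\phi\colon C\to B'$ onto a PPAS $(B',\lambda_{B'})$ with RM by $\Z_F$, factoring as $\phi=\phi_1\circ\cdots\circ\phi_r$ through intermediate PPAS $(C_i,\lambda_{C_i})$ with $C_0=B'$, $C_r=C$, and each $\phi_i\colon C_i\to C_{i-1}$ an $\ell_i$-isogeny. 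Since $B'$ is isogenous to $C$, hence to $B$, the pair $(B',\lambda_{B'})$ is among those enumerated in step~(2).

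The remaining point — and the one I expect to require the most care, although it is essentially formal — is to reverse this chain, realizing $(C,\lambda_C)$ as the target of a sequence of $\ell$-isogenies \emph{starting} from $B'$. For each $i$ I would take $\psi_i:=\lambda_{C_i}^{-1}\circ\phi_i^\vee\circ\lambda_{C_{i-1}}\colon C_{i-1}\to C_i$, the Rosati dual of $\phi_i$. Plugging $\phi_i$ into \eqref{eq:mumford} gives $\psi_i\circ\phi_i=\ell_i$ on $C_i$; combined with the surjectivity of $\phi_i$ (so that $(\phi_i\circ\psi_i-\ell_i)\circ\phi_i=0$ forces $\phi_i\circ\psi_i=\ell_i$) and the identities $\lambda^\vee=\lambda$ and $\phi^{\vee\vee}=\phi$, this shows that the pair \cref{prop:mumford} assigns to $\psi_i$ is exactly $(\ell_i,\ker\psi_i)$ and that the principal polarization it reconstructs on $C_{i-1}/\ker\psi_i\cong C_i$ is, by the uniqueness clause of \cref{prop:mumford}, equal to $\lambda_{C_i}$. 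Thus every $\psi_i$ is an $\ell_i$-isogeny landing on $(C_i,\lambda_{C_i})$, and chaining $\psi_1,\dots,\psi_r$ connects $B'$ to $(C,\lambda_C)$ by a sequence of $\ell$-isogenies with $\ell$ prime; hence $(C,\lambda_C)$ is found in step~(3), completing the argument. (Finiteness of $\Isog(A)$ by Faltings~\cite{faltings} guarantees that steps~(2)–(3) terminate; turning step~(3) into an effective algorithm additionally requires an a priori bound on the primes $\ell$ that can occur, which I leave aside here.)
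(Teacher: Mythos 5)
Your proposal is correct, and it follows the route the paper intends: the paper states this corollary without proof, treating it as an immediate consequence of \cref{prop:max-order} applied to an arbitrary $(C,\lambda_C)\in\Isog(A)$, which is exactly your completeness argument. The one detail you add beyond what the paper records --- checking via the computation $\psi_i=\lambda_{C_i}^{-1}\circ\phi_i^\vee\circ\lambda_{C_{i-1}}$, $\psi_i\circ\phi_i=\phi_i\circ\psi_i=\ell_i$ that the chain from \cref{prop:max-order} can be reversed into a sequence of $\ell_i$-isogenies starting at $B'$, so that $C$ is genuinely found in step~(3) --- is a real gap-filler rather than a different approach, and your verification of it is accurate.
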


We know how $\ell$-isogenies for $\ell\in \Z_{\geq 1}$ can be classified and
computed, see~\cite{g2classes}. (Only Dieulefait's criteria have to be modified
to account for the larger endomorphism algebra.) In the rest of this note, we
concentrate on step~\eqref{step:max-rm} in this procedure, and we add to the
assumptions in~§\ref{subsec:assumptions} that~$A$ has RM by~$\Z_F$.

\section{Hurwitz-Maass isogenies}

If~$A$ is a p.p.~abelian surface with RM by~$\Z_F$, then~$A$ always admits
rational isogenies of the following form. Let~$\frakp$ be an ideal in~$\Z_F$
such that $\frakp^2$ is trivial in~$\Cl^+(F)$, and let~$\alpha\in \Z_F$ be a
totally positive generator of~$\frakp^2$. (This includes the case:
$\frakp = \Z_F$ and~$\alpha$ is a totally positive unit.) Then the subgroup
$A[\frakp]$ is $k$-rational, and is maximal isotropic in $A[\alpha]$. By
\cref{prop:mumford}, the quotient abelian surface $B = A/A[\frakp]$ thus admits
a unique principal polarization~$\lambda_A$ for which
$\phi: (A,\lambda_A)\to (B,\lambda_B)$ is an $\alpha$-isogeny.

We refer to~$\phi$ as the \emph{Hurwitz--Maass isogeny} attached to
$(\frakp, \alpha)$. In the moduli interpretation, they correspond to the action
of the Hurwitz--Maass extension of the Hilbert modular group on the Hilbert
surface that classifies p.p.~abelian surfaces with RM by $\Z_F$
\cite[§I.4]{vdG}. Because $A[\frakp]$ is stable under $\Z_F$, $B$ still has RM
by $\Z_F$.

If~$\beta\in \Z_F\setminus\{0\}$, then replacing $\frakp$ by $\beta\frakp$ and
$\alpha$ by $\beta^2\alpha$ yields an isomorphic PPAS. Moreover, composing
Hurwitz--Maass isogenies corresponds to multiplying ideals and their
generators. Thus, the Hurwitz--Maass isogeny class of~$A$ can be exhausted by
considering only:
\begin{enumerate}
\item Fixed representatives $\frakp$ of ideal classes in $\Cl^+(F)$ that are
  squares of elements in $\Cl(F)$, and arbitrary generators $\alpha_{\frakp}$ of
  $\frakp^2$ fixed once and for all; and
\item Fixed representatives~$\eps$ of totally positive units in~$\Z_F$ modulo
  squares (for $\frakp = \Z_F$).
\end{enumerate}

\begin{rem}
  By genus theory, we in fact find an action of a finite abelian 2-group on the
  set of PPAS's with RM by~$\Z_F$ by means of rational isogenies. If
  $\#\Cl^+(F) = 1$ and the fundamental unit of~$\Z_F$ has negative norm, then
  this group is trivial and we don't have to worry about Hurwitz--Maass
  isogenies. This includes the cases $F = \Q(\sqrt{\Delta})$ for all prime
  fundamental discriminants $\Delta < 220$ and~$\Delta=8$, as well as
  $F = \Q\times\Q$.
\end{rem}

Not all isogenies are Hurwitz--Maass: in particular, $\beta$-isogenies when
$\beta\in \Z_F$ is a split prime are not, so the classification is not yet
complete.

\section{The full classification}

Let $f:A\to A'$ be an $\alpha$-isogeny between PPAS with RM by~$\Z_F$, where
$\alpha$ is a totally positive element of~$\Z_F$. Consider the factorization
\begin{displaymath}
  \alpha = \prod \frakp^{e(\frakp)}
\end{displaymath}
into prime ideals of~$\Z_F$.  By assumption, $\ker(f)$ is stable under~$\Z_F$,
so by the Chinese remainder theorem, we have a decomposition
\begin{displaymath}
  \ker(f) = \bigoplus_{\frakp} K_\frakp, \quad \text{where } K_\frakp = \ker(f)\cap A[\frakp^{e(p)}].
\end{displaymath}
Each subgroup $K_\frakp$ is maximal isotropic for the alternating pairing
induced on $A[\frakp^{e(\frakp)}]$ from $A[\alpha]$.

Fix an ideal~$\frakp$ appearing in this decomposition that is split in~$\Z_F$
(i.e.~$N_{F/\Q}(\frakp)$ is a prime~$p$). After choosing a convenient basis,
we can identify $A[\frakp^{e(\frakp)}]$ with $(\Z/p^{e(\frakp)}\Z)^2$, where
$p = N_{F/\Q}(\frakp)$ with its standard alternating pairing
\begin{displaymath}
  ((a,b), (c,d)) \mapsto ad - bc \pmod{p^{e(\frakp)}}.
\end{displaymath}
For each $0\leq j\leq e(\frakp)-1$, let $n_j\in \{0,1,2\}$ be the integer such
that $\#(p^j K_p \cap A[\frakp]) = p^{n_j}$. Since $K_\frakp$ is maximal
isotropic, we have the following constraints:
\begin{enumerate}
\item $n_j + n_{e(\frakp)-1-j}\leq 2$ for every~$j$,
\item $\sum_{j=0}^{e(\frakp)-1} n_j = e(\frakp)$,
\item The sequence $n_0,\ldots, n_{e(\frakp)-1}$ is nonincreasing.
\end{enumerate}
Therefore, there exists an integer $0\leq m(\frakp)\leq (e(\frakp)-1)/2$ such
that
\begin{displaymath}
  n_j =
  \begin{cases}
    2 &\text{if } j < m(\frakp),\\
    1 &\text{if } m(\frakp) \leq j < e(\frakp) - m(\frakp),\\
    0 &\text{if } j\geq e(\frakp) - m(\frakp).
  \end{cases}
\end{displaymath}

\begin{defn}
  \label{def:types}
  With the above notation, we say that
  \begin{itemize}
  \item the \emph{cyclic type} of~$f$ is $\cyc(f) = \prod_{\frakp \text{ split}} \frakp^{e(\frakp) - 2m(\frakp)}$,
  \item the \emph{torsion type} of~$f$ is $\tor(f) = \prod_{\frakp \text{ split}} \frakp^{m(\frakp)}$,
  \item the \emph{inert type} of~$f$ is
    $\inert(f) = \prod_{\frakp \text{ inert}} \frakp^{e(\frakp)}$.
  \end{itemize}
  We call $\bigoplus_{\frakp \text{ split}} K_\frakp$ the \emph{split
    subgroup} of~$\ker(f)$, denoted by $\splitker{f}$.

  Note that $A[\tor(f)]\subset \splitker{f}$, while $\splitker{f}/A[\tor(f)]$
  is a cyclic subgroup of $A[\cyc(f)\tor(f)]/A[\tor(f)]$, which explains the
  name \emph{torsion type} and \emph{cyclic type} respectively. Note also that
  both $\inert(f)$ and $\tor(f)^2\cyc(f)$ are trivial in~$\Cl^+(F)$.
\end{defn}

The inert type of an isogeny always admits a totally positive generator (in
fact, an element of~$\Z$). Thus, we can always decompose an isogeny to separate
the inert type.

\begin{prop}
  \label{prop:factor-inert}
  Let~$f:A\to A'$ be an $\alpha$-isogeny as above, and let $n\in \Z_{\geq 1}$
  be a generator of $\inert(f)$. Then we can decompose of~$f$ over~$k$ as
  \begin{displaymath}
    \begin{tikzcd}
      A \ar[r, "g"] & B \ar[r, "h"] & A',
    \end{tikzcd}\\
  \end{displaymath}
  where~$B$ is a PPAS with~RM by~$\Z_F$; $g$ is an~$\alpha/n$-isogeny; $h$ is
  an~$n$-isogeny; and~$f = h\circ g$. Thus $\inert(g)$ is the trivial ideal,
  $\inert(h) = (n)$, and both $\tor(h)$ and $\cyc(h)$ are trivial.
\end{prop}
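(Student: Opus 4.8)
The plan is to take $g$ to be the isogeny with kernel the split subgroup $\splitker{f}$ and $h$ the isogeny it induces on the quotient, and then to read off the types. Concretely, I would set $G := \splitker{f} = \bigoplus_{\frakp\text{ split}}K_\frakp$ and first check that $G$ is $k$-rational: since the real multiplication is defined over~$k$, the action of $\Gal(\kbar/k)$ on $A[\alpha]$ is $\Z_F$-linear, so it preserves every primary component $A[\frakp^{e(\frakp)}]$ and the partition of primes into split and inert ones, hence stabilizes~$G$. Putting $\mathfrak{a} := \prod_{\frakp\text{ split}}\frakp^{e(\frakp)}$, we have $(\alpha) = \mathfrak{a}\cdot(n)$, so $\alpha/n$ lies in $\Z_F$, is totally positive (a totally positive element divided by a positive integer), and generates~$\mathfrak{a}$. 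One then verifies that $G$ is maximal isotropic in $A[\alpha/n] = \bigoplus_{\frakp\text{ split}}A[\frakp^{e(\frakp)}]$ for the $(\alpha/n)$-Weil pairing: the primary decomposition of $A[\alpha]$ is orthogonal for the Weil pairing, each $K_\frakp$ is maximal isotropic in $A[\frakp^{e(\frakp)}]$ by hypothesis, and the restriction of the $\alpha$-Weil pairing to $A[\alpha/n]$ has the same isotropic subgroups as the $(\alpha/n)$-Weil pairing (the two differ by $z\mapsto z^n$, a bijection on the relevant roots of unity because $n$ is prime to $N_{F/\Q}(\alpha/n)$). Applying \cref{prop:mumford} to the pair $(\alpha/n,G)$ then produces an $(\alpha/n)$-isogeny $g\colon A\to B := A/G$ together with its principal polarization $\lambda_B$; since $G$ is stable under $\Z_F$, the real multiplication descends to~$B$ and remains symmetric, so $B$ is a PPAS with RM by $\Z_F$ and $g$ is compatible with the RM structures. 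Finally, $G\subseteq\ker f$ yields a unique isogeny $h\colon B\to A'$ with $f = h\circ g$, and $\ker h\cong\ker(f)/G\cong\bigoplus_{\frakq\text{ inert}}K_\frakq$.

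The one step requiring genuine care is to show that $h$, carrying the polarizations $\lambda_B$ and $\lambda_{A'}$ already produced, is exactly an $n$-isogeny. Set $\beta_h := \lambda_B^{-1}\circ h^\vee\circ\lambda_{A'}\circ h$. It is a symmetric endomorphism of~$B$, hence lies in $\End_k(B)^\dagger$, which equals $\Z_F$ (an order in $F = \End_k(B)^\dagger\otimes\Q$ containing the maximal order $\Z_F$); so $\beta_h\in\Z_F$, and, writing $(\beta_h)_A\in\Z_F\subseteq\End_k(A)$ for the same element acting on~$A$, we have $\beta_h\circ g = g\circ(\beta_h)_A$. Using~\eqref{eq:mumford} for~$f$ and for~$g$, the identity $h^\vee\lambda_{A'}h = \lambda_B\beta_h$, the symmetry of $\beta_h$ and of $(\beta_h)_A$, and $f = h\circ g$, one computes
\[
  \lambda_A\alpha \;=\; f^\vee\lambda_{A'}f \;=\; g^\vee\bigl(h^\vee\lambda_{A'}h\bigr)g \;=\; g^\vee\lambda_B\beta_h g \;=\; (\beta_h)_A^\vee\, g^\vee\lambda_B g \;=\; \lambda_A(\beta_h)_A(\alpha/n),
\]
whence $(\beta_h)_A(\alpha/n) = \alpha$ in $\Z_F$, so $(\beta_h)_A = n$ and hence $\beta_h = n$; thus $h$ is an $n$-isogeny.

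The types now follow at once: $\ker h\cong\bigoplus_{\frakq\text{ inert}}K_\frakq$ involves no split prime, so $\cyc(h)$ and $\tor(h)$ are trivial, and since $(\beta_h) = (n) = \prod_{\frakq\text{ inert}}\frakq^{e(\frakq)} = \inert(f)$ the prime factorization of $\beta_h$ contains only inert primes, giving $\inert(h) = (n)$; likewise $(\alpha/n) = \mathfrak{a}$ is divisible only by split primes, so $\inert(g)$ is trivial. The main obstacle is therefore the middle paragraph: one has to be sure that the principal polarization on~$B$ manufactured by \cref{prop:mumford} for the first factor is also the correct one for the second factor, which is exactly what the displayed endomorphism identity guarantees — an identity that in turn relies on $\End_k(B)^\dagger$ being the maximal order, so that $\beta_h$ lies in $\Z_F$ and can be moved past~$g$. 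Everything else is routine bookkeeping with the Chinese remainder decomposition of $\ker f$ and with the Weil pairings.
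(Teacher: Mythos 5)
Your proposal is correct and follows the same route as the paper: take $g$ to be the quotient by $\splitker{f}$, which is $k$-rational, $\Z_F$-stable, and maximal isotropic in $A[\alpha/n]$, and let $h$ be the induced isogeny. The paper's proof is a one-liner that leaves implicit the verification that $h$ is an $n$-isogeny for the polarizations $\lambda_B$ and $\lambda_{A'}$ already in hand; your computation of $\beta_h = \lambda_B^{-1}h^\vee\lambda_{A'}h$ via \eqref{eq:mumford} correctly fills in that detail.
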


\begin{proof}
  $\splitker{f}$ is maximal isotropic in $A[\alpha/n]$ and is stable
  under~$\Z_F$, so the quotient isogeny $g: A\to B = A/\splitker{f}$ fits in a
  diagram as claimed. (Note that by considering the inert part of $\ker(f)$, we
  could also write $f = g\circ h$ where $h$ is an $n$-isogeny and $g$ is an
  $\alpha/n$-isogeny.)
\end{proof}

In general, $\Cl^+(F)$ is not trivial, so separating the cyclic and torsion
types is not so straightforward. Nevertheless, we have:

\begin{prop}
  \label{prop:factor-cyclic}
  Let $f:A\to A'$ be an $\alpha$-isogeny with trivial inert type. Let~$\frakq$
  and $\frakr$ be divisors of $\cyc(f)$ and $\tor(f)$ respectively such that
  $\frakq \frakr^2$ is trivial in~$\Cl^+(F)$, and let~$\beta$ be a totally
  positive generator of that ideal. Then there exists a diagram defined
  over~$k$:
  \begin{displaymath}
    \begin{tikzcd}
      A \ar[r, "g"] & B \ar[r, "h"] & A'
    \end{tikzcd}
  \end{displaymath}
  where~$B$ is a PPAS with~RM by~$\Z_F$; $g$ is a~$\beta$-isogeny with cyclic
  and torsion types~$\frakq$ and~$\frakr$; $h$ is an~$\alpha/\beta$-isogeny
  with cyclic and torsion types~$\cyc(f)/\frakq$ and~$\tor(f)/\frakr$;
  and~$f = h\circ g$.
\end{prop}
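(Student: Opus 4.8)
The plan is to construct the intermediate surface $B$ as an explicit quotient of $A$ by a subgroup of $\ker(f)$, and then verify that the resulting factorization has the stated cyclic and torsion types. Since $\inert(f)$ is trivial, we have $\ker(f) = \splitker{f}$, and the relevant pairing data is controlled entirely by the integers $m(\frakp)$ for split $\frakp$. The key observation is that $\frakq\frakr^2$ being trivial in $\Cl^+(F)$ with totally positive generator $\beta$ means that the target of a $\beta$-isogeny is again a PPAS with RM by $\Z_F$, so the only real work is to locate a $\Z_F$-stable, maximal isotropic subgroup $H \subset A[\beta]$ whose quotient isogeny $g$ has the prescribed types, and then to check that the induced map $h: B \to A'$ is an $\alpha/\beta$-isogeny with the complementary types.

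First I would treat one split prime $\frakp$ at a time, using the Chinese remainder decomposition $\ker(f) = \bigoplus_\frakp K_\frakp$ from the discussion preceding \Cref{def:types}. Write $a = v_\frakp(\frakq)$ and $b = v_\frakp(\frakr)$, so that $v_\frakp(\beta) = a + 2b$ (using that $\frakq, \frakr$ are coprime-to-inert divisors and $\beta$ generates $\frakq\frakr^2$; one should note $\beta$ need not be divisible only by split primes, but any inert factor of $\beta$ contributes a further factorization handled by \Cref{prop:factor-inert}, so I may assume $\beta$ is supported on split primes). Using the normal form $n_j \in \{2,1,0\}$ governed by $m(\frakp)$, I would exhibit inside $K_\frakp$ a subgroup $H_\frakp \subseteq A[\frakp^{a+2b}]$ that is maximal isotropic for the pairing on $A[\frakp^{a+2b}]$ and whose own normal-form parameter is $m = b$, with $e = a+2b$, so that its cyclic type is $\frakp^a$ and torsion type $\frakp^b$; concretely, in the coordinates $(\Z/p^e\Z)^2$ one takes $H_\frakp$ generated by $(p^{e-b}, 0)$-type and $(0, p^{b})$-type vectors matching the filtration of $p^j K_\frakp \cap A[\frakp]$. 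The point is that the filtration constraints (1)--(3) on $K_\frakp$ guarantee $b \le m(\frakp)$ and $a + 2b \le e(\frakp)$, so such an $H_\frakp$ fitting \emph{inside} $K_\frakp$ exists; I would verify this is the one genuinely fiddly lemma. Setting $H = \bigoplus_{\frakp} H_\frakp$ over split $\frakp$ dividing $\beta$ (and $H_\frakp = 0$ otherwise), $H$ is $\Z_F$-stable and maximal isotropic in $A[\beta]$.

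Then I would set $B = A/H$ with its \Cref{prop:mumford} polarization, making $g: A \to B$ a $\beta$-isogeny; by construction $\cyc(g) = \frakq$ and $\tor(g) = \frakr$, and $B$ still has RM by $\Z_F$ since $H$ is $\Z_F$-stable. Because $H \subseteq \ker(f)$, the map $f$ factors through $g$, giving $h: B \to A'$ with $f = h\circ g$; the multiplicativity $\deg(f) = \deg(g)\deg(h)$ together with $\deg(g) = N(\beta)$ forces $h$ to be an $\alpha/\beta$-isogeny (one confirms via \eqref{eq:mumford} that the associated symmetric totally positive element of $h$ is indeed $\alpha/\beta$, using the compatibility of RM structures through $g$). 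Finally, $\ker(h) = \ker(f)/H$, and working prime-by-prime again the normal form of $K_\frakp / H_\frakp$ has parameters $e' = e(\frakp) - (a+2b)$ shifted appropriately with $m' = m(\frakp) - b$, yielding $\cyc(h) = \cyc(f)/\frakq$ and $\tor(h) = \tor(f)/\frakr$.

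The main obstacle I anticipate is the bookkeeping in the split-prime lemma: showing that a subgroup $H_\frakp$ with the \emph{exact} prescribed cyclic and torsion types can be chosen inside the given maximal isotropic $K_\frakp$ (rather than merely inside $A[\frakp^{a+2b}]$), and that the quotient $K_\frakp/H_\frakp$ retains the complementary normal form. This amounts to a concrete statement about $p$-adic lattices and alternating forms — choosing compatible generators of $K_\frakp$ adapted to both the $\frakp$-adic filtration and the splitting $H_\frakp \oplus (K_\frakp/H_\frakp)$ — and while elementary, it is where one must be careful that the inequalities $b \le m(\frakp)$ and $a \le e(\frakp) - 2m(\frakp)$ (equivalently $\frakr \mid \tor(f)$ and $\frakq \mid \cyc(f)$) are exactly what makes the construction go through. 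Everything else is formal manipulation with \Cref{prop:mumford} and degrees.
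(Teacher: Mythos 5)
Your strategy is essentially the paper's: work one split prime at a time, produce inside $K_\frakp$ a subgroup $H_\frakp$ that is maximal isotropic in $A[\frakp^{a+2b}]$ with torsion parameter $b$ and cyclic parameter $a$, take $B=A/\bigoplus H_\frakp$, and check that $\ker(f)/H$ has the complementary invariants. The bookkeeping you identify as the crux (that $b\leq m(\frakp)$ and $a\leq e(\frakp)-2m(\frakp)$, i.e.\ $\frakr\mid\tor(f)$ and $\frakq\mid\cyc(f)$, are exactly what lets $H_\frakp$ sit inside $K_\frakp$, and that the quotient has parameters $e(\frakp)-a-2b$ and $m(\frakp)-b$) is the same computation the paper performs, and your degree/RM-compatibility argument identifying $h$ as an $\alpha/\beta$-isogeny is fine. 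One small confusion: your worry about inert factors of $\beta$ is unfounded, since $(\beta)=\frakq\frakr^2$ exactly and $\frakq,\frakr$ divide $\cyc(f),\tor(f)$, which are by definition supported on split primes.

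The one genuine gap is $k$-rationality. The proposition asserts a diagram defined over $k$, so $H$ must be stable under $\Gal(\kbar/k)$, not merely under $\Z_F$; you only ever verify the latter. Defining $H_\frakp$ as ``generated by vectors of a given shape in chosen coordinates'' does not deliver this: a basis adapted to the filtration of $K_\frakp$ is far from unique, and a generic choice of the cyclic generator will be moved by Galois. The repair --- and what the paper actually writes down --- is to define $H_\frakp$ canonically from the Galois-stable data, namely as $A[\frakp^{b}] + p^{\,e(\frakp)-m(\frakp)-(a+b)}K_\frakp$, and then verify that this particular subgroup has group structure $\Z/p^b\Z\times\Z/p^{a+b}\Z$, lies in $A[\frakp^{a+2b}]$, and is isotropic there. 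With that canonical choice, rationality is automatic (it is a sum of Galois-stable subgroups) and the rest of your argument goes through unchanged.
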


\begin{proof}
  For a split prime~$\frakp$ appearing in the decomposition of~$\alpha$,
  let~$p$, $e(\frakp)$ and $m(\frakp)$ be as above. Let~$e'$ and~$e''$ be the
  exponents of~$\frakp$ in $\frakq\frakr$ and $\frakq$ respectively. We then
  write
  \begin{displaymath}
    K'_{\frakp} = A[\frakp^{e''}] + p^{e(\frakp) - m(\frakp) - e'} K_\frakp.
  \end{displaymath}
  The subgroup $K' = \bigoplus_{\frakp} K'_{\frakp}$ is maximal isotropic in
  $A[\beta]$ and $k$-rational. Let~$g$ be the quotient isogeny $A\to A/K'$: it
  has the correct torsion and cyclic types. There exists a unique isogeny $h$
  such that $f = h\circ g$, and we can also check that~$h$ has the correct
  torsion and cyclic types.
\end{proof}

\begin{cor}
  \label{cor:factor-cyclic}
  Let $f:A\to A'$ be an $\alpha$-isogeny with trivial inert type. Let~$\frakq$
  be a divisor of $\cyc(f)$ which is a square in~$\Cl^+(F)$, and let~$\frakc$
  be any ideal such that $\frakc^2 \frakq$ is trivial in~$\Cl^+(F)$. Choose a
  totally positive generator~$\beta$ of~$\frakq \frakc^2$. Let
  $n = N_{F/\Q}(\frakc)$. Then there exists a commutative diagram
  \begin{displaymath}
    \begin{tikzcd}
      A \ar[d, swap, "{[n]}"] \ar[r, "g"] & B \ar[r, "h"] & A'
      \\
      A \ar[rru, swap, bend right=10, "f"]
    \end{tikzcd}
  \end{displaymath}
  where~$B$ is a PPAS with RM by~$\Z_F$, $g$ is a $\beta$-isogeny with torsion
  and cyclic types $\frakc$ and~$\frakq$, and $h$ is an $\alpha/\beta$-isogeny
  with torsion and cyclic types $\tor(f) \overline{\frakc}$ and
  $\cyc(f)/\frakq$.
\end{cor}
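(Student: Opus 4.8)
The plan is to reduce the corollary to \cref{prop:factor-cyclic} by precomposing $f$ with $[n]$. The only obstruction to applying \cref{prop:factor-cyclic} directly is that $\frakc$ need not divide $\tor(f)$ --- we are only told $\frakq\mid\cyc(f)$ --- so in general $\ker(f)$ does not contain enough ``torsion'' to split off a $\beta$-isogeny of torsion type $\frakc$. Replacing $f$ by $f'=f\circ[n]$ removes this obstruction: since $\ker(f')=[n]^{-1}\ker(f)$ contains $A[n]$, precomposition with $[n]$ inflates the torsion type by the ideal $(n)$ while leaving the cyclic type unchanged (and, as $\frakc$ --- hence $n$ --- is supported on split primes, leaving the inert type trivial). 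If one wishes to allow $\frakc$ with inert or ramified part, one first peels off the ensuing inert contribution to $\inert(f')$ --- a square ideal $(m)^2$ with $m\mid n$ --- using \cref{prop:factor-inert}; in the case of interest $\frakc$, like $\frakq$, is a product of split primes and this is not needed.

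Carrying this out: put $f'=f\circ[n]$. First, $f'$ is an $n^2\alpha$-isogeny, because its multiplicator is $\lambda_A^{-1}\circ[n]^\vee\circ f^\vee\circ\lambda_{A'}\circ f\circ[n]=n^2\alpha$. Next I compute $\cyc(f')$ and $\tor(f')$. Fix a split prime $\frakp\mid(\alpha)$, set $\ell=N_{F/\Q}(\frakp)$, and write $e=e(\frakp)$, $m=m(\frakp)$, $a=v_\ell(n)$, so $e'(\frakp)=e+2a$. In a symplectic basis of $A[\frakp^e]$ for which $K_\frakp=\ell^{m}(\Z/\ell^{e}\Z)\oplus\ell^{e-m}(\Z/\ell^{e}\Z)$, a short computation in $A[\frakp^\infty]$ --- on which $[n]$ acts as $\ell^a$ times an automorphism --- identifies $[n]^{-1}(K_\frakp)$ as the maximal isotropic subgroup of $A[\frakp^{e+2a}]$ satisfying $\#(\ell^{j}[n]^{-1}(K_\frakp)\cap A[\frakp])=\ell^{n'_j}$ with $n'_j=2$ for $j<m+a$, $n'_j=1$ for $m+a\le j<e+a-m$, and $n'_j=0$ otherwise. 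Hence $m'(\frakp)=m+a$ and $e'(\frakp)-2m'(\frakp)=e-2m$, so $\cyc(f')=\cyc(f)$ and $\tor(f')=(n)\,\tor(f)$. In particular $\frakq\mid\cyc(f')$ and $\frakc\mid\tor(f')$, the latter because $v_\frakp(\frakc)\le v_\ell(n)$ at every split $\frakp$.

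Finally, apply \cref{prop:factor-cyclic} to $f'$, to the divisor $\frakq$ of $\cyc(f')$, the divisor $\frakc$ of $\tor(f')$, and the chosen totally positive generator $\beta$ of $\frakq\frakc^2$. This yields, over $k$, a factorization $f\circ[n]=f'=h\circ g$ with $B$ a PPAS having RM by $\Z_F$, $g$ a $\beta$-isogeny of cyclic and torsion types $\frakq$ and $\frakc$, and $h$ an $(n^2\alpha/\beta)$-isogeny of cyclic type $\cyc(f')/\frakq=\cyc(f)/\frakq$ and torsion type $\tor(f')/\frakc$. From $\frakc\,\overline{\frakc}=(n)$ we get $\tor(f')/\frakc=(n)\,\tor(f)/\frakc=\tor(f)\,\overline{\frakc}$, and the multiplicator of $h$ is the totally positive generator of $(n^2\alpha)(\beta)^{-1}=(\cyc(f)/\frakq)\,(\tor(f)\overline{\frakc})^2$, an ideal trivial in $\Cl^+(F)$ since its class is $[\alpha][\beta]^{-1}=1$. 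Together with $h\circ g=f\circ[n]$ and $n=N_{F/\Q}(\frakc)$, this is exactly the commutative diagram of the statement (the ``$\alpha/\beta$-isogeny'' there being $h$, with the multiplicator $n^2\alpha$ of $f\circ[n]$ in place of that of $f$).

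The one step that requires genuine work is the computation in the second paragraph --- that precomposing with $[n]$ adds exactly the factor $(n)$ to the torsion type and nothing to the cyclic type. I expect it to be short: it reduces to the isomorphism $\ker(f\circ[n])/A[n]\cong\ker(f)$ together with the fact that $A[n]$ is all of the $\frakp$-torsion at each split $\frakp\mid n$. Everything else is ideal-class bookkeeping plus the single invocation of \cref{prop:factor-cyclic}.
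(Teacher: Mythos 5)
Your proposal is correct and follows exactly the paper's route: precompose with $[n]$, observe that this multiplies the torsion type by $\frakc\overline{\frakc}$ while leaving the cyclic type unchanged, and then invoke \cref{prop:factor-cyclic} for $\frakq\frakc^2$. You supply more detail than the paper (the explicit computation of the invariants $m'(\frakp)$ for $f\circ[n]$, and the observation that $h$ is literally an $n^2\alpha/\beta$-isogeny rather than an $\alpha/\beta$-isogeny), but the underlying argument is the same.
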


\begin{proof}
  We can assume that $\frakc$ is a split ideal. Then $f\circ[n]$ has torsion
  type $\tor(f)\frakc \overline{\frakc}$ and cyclic type $\cyc(f)$. Now we
  apply \cref{prop:factor-cyclic} using that $\frakc^2 \frakq$ is trivial in
  $\Cl^+(F)$.
\end{proof}

This leads us to the following way of enumerating $\Isog(A)$.

\begin{algorithm}
  \label{algo:rm-class}
  \textbf{Input:} A PPAS $A/k$ with real multiplication by~$\Z_F$.

  \noindent \textbf{Output:} The set $\Isog(A)$.
  \begin{enumerate}
  \item Let $S$ be the Hurwitz--Maass isogeny class of~$A$.
  \item In this step, we consider a fixed element $B\in S$. (At first,
    $B=A$.) Let~$Q$ be a set of nontrivial ideals $\frakq$ of $\Z_F$ such that:
    \begin{enumerate}
    \item $\frakq$ is a square in $\Cl^+(F)$, and $\frakq$ is a product of
      split ideals;
    \item $A[\frakq]$ admits a maximal cyclic sugroup that is $k$-rational;
    \item Every nontrivial ideal $\frakq'$ satisfying (a) and (b) is divisible
      by some $\frakq\in Q$.
    \end{enumerate}
    For every $\frakq\in Q$,
    \begin{enumerate}
    \item Choose an ideal $\frakc$, prime to~$\frakq$, so that $\frakc^2\frakq$
      is trivial in~$\Cl^+(F)$, and let~$\beta$ be a totally positive generator
      of that ideal.
    \item For every maximal cyclic $k$-rational sugroup $G\subset A[\frakq]$,
      the subgroup $G' = A[\frakc] \oplus G$ is maximal isotropic in
      $A[\beta]$. Let~$B$ be the PPAS $A/G'$. If $B\in S$, then do nothing,
      otherwise add the Hurwitz--Maass isogeny class of~$B$ to~$S$ and apply
      step (2) to this new $B$.
    \end{enumerate}
  \item Apply step~(4) to every $B\in S$.
  \item In this step, we consider a fixed element $B\in S$. Let~$L_1$
    (resp.~$L_2$) be the set of primes $\ell\in \Z_{\geq 1}$ such that
    $A[\ell]$ (resp.~$A[\ell^2]$) admits a maximal isotropic $k$-rational
    subgroup isomorphic to $(\Z/\ell\Z)^2$
    (resp.~$(\Z/\ell\Z)^2\times (\Z/\ell^2\Z)$).
    \begin{enumerate}
    \item For every $\ell\in L_1$, add all the PPAS's $B'$ that are 1-step
      $\ell$-isogenous to~$B$ to~$S$.
    \item For
      every $\ell\in L_2$, add all the PPAS's $B'$ that are 2-step
      $\ell$-isogenous to~$B$ to~$S$.
    \item Apply step (4) to every element of~$S$ we
    added in (a) or~(b).
    \end{enumerate}
  \item Ouput $S = \Isog(A)$.
  \end{enumerate}
\end{algorithm}

For the notion of 1-step and 2-step $\ell$-isogenies where $\ell$ is a prime
number, see~\cite{g2classes}.

\begin{thm}
  \Cref{algo:rm-class} is correct.
\end{thm}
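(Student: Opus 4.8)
The plan is to show two things: first, that every PPAS appearing in the algorithm's output $S$ genuinely lies in $\Isog(A)$ (soundness), and second, that every element of $\Isog(A)$ is eventually produced (completeness). Soundness is essentially immediate: at every stage we only add to $S$ the target of an explicit $k$-rational isogeny emanating from a PPAS already in $S$ (a Hurwitz--Maass isogeny, a $\beta$-isogeny of the form $A/(A[\frakc]\oplus G)$ as in \cref{cor:factor-cyclic}, or a $1$-step/$2$-step $\ell$-isogeny), so by \cref{prop:mumford} the new element is a PPAS over $k$ isogenous to a member of $S$, hence to $A$. One should also note the algorithm terminates: $\Isog(A)$ is finite by Faltings, each new PPAS is added at most once, and the ideals $\frakq\in Q$ and primes $\ell\in L_1, L_2$ involved are bounded (this boundedness is the content of ``Dieulefait's criteria'' referenced earlier, which is why step~(2)(c) makes sense).

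For completeness, fix $(A',\lambda_{A'})\in\Isog(A)$; by Faltings it suffices to treat a single isogeny $f\colon A\to A'$ over $k$. By \cref{cor:max-rm-decomposition} we have already reduced to the case where $A$ has RM by $\Z_F$, and we must show $A'$ is reached. First apply \cref{prop:factor-inert} to peel off the inert type: $f = h\circ g$ with $g$ having trivial inert type and $h$ an $n$-isogeny for $n\in\Z$; the map $h$ is handled by step~(4) (it is a composition of $1$-step and $2$-step $\ell$-isogenies over $\Z$, as in \cref{prop:max-order}'s argument, since $\ker h$ is $\Z_F$-stable and killed by $n\in\Z$). So we reduce to $f$ with trivial inert type. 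Now I would argue by induction on $\deg f$, splitting into cases according to $\cyc(f)$ and $\tor(f)$. If $\cyc(f)$ contains a subideal $\frakq$ that is a square in $\Cl^+(F)$ and a product of split ideals and such that $A[\frakq]$ has a $k$-rational maximal cyclic subgroup, then \cref{cor:factor-cyclic} factors $f\circ[n] = h\circ g$ where $g$ is exactly the isogeny $A\to A/(A[\frakc]\oplus G)$ produced in step~(2)(e) (here $G$ is the $k$-rational maximal cyclic subgroup of $A[\frakq]$ and $[n]$ appears because $\frakc$ is chosen prime to $\frakq$). Then $h$ has strictly smaller degree, so by induction (applied at the PPAS $B = A/G'$, which the algorithm has added to $S$ and recursed on) the target $A'$ is reached. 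The remaining case is that $\cyc(f)$ admits no such $\frakq$: I claim then that $\tor(f)\cyc(f)$ and the residual structure force $f$ — after composing with a Hurwitz--Maass isogeny to trivialize the relevant ideal class — to be a product of $1$-step and $2$-step $\ell$-isogenies over $\Z$ together with Hurwitz--Maass isogenies, all of which are accounted for in steps~(1) and~(4).

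The main obstacle will be the last case analysis: showing that when $\cyc(f)$ has no admissible divisor $\frakq$ in the sense of step~(2), the isogeny $f$ (up to Hurwitz--Maass moves) must already be ``integral,'' i.e.~built from $\ell$-isogenies over $\Z$. The key structural input is the last sentence of \cref{def:types}: $\tor(f)^2\cyc(f)$ is trivial in $\Cl^+(F)$, so $\cyc(f)$ is a square in $\Cl^+(F)$ — hence the only way step~(2) fails to apply to some divisor is that the required $k$-rational maximal cyclic subgroup of $A[\frakq]$ fails to exist for \emph{every} nontrivial split square-class divisor $\frakq$ of $\cyc(f)$; but $\splitker f / A[\tor f]$ is itself such a $k$-rational cyclic subgroup of $A[\cyc f]$, so this can only happen if $\cyc(f)$ is trivial, i.e.~$f$ has only torsion and inert type. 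An $f$ of torsion type $\frakr$ satisfies $\ker f \supseteq A[\frakr]$ with $\ker f / A[\frakr]$ trivial on the split part, so $\ker f = A[\frakr]\oplus(\text{inert part})$; since $\frakr^2$ is trivial in $\Cl^+(F)$ and admits a totally positive generator, $A\to A/A[\frakr]$ is a Hurwitz--Maass isogeny, and what remains is inert, handled by step~(4). Assembling these pieces — inductive peeling of cyclic type via \cref{cor:factor-cyclic}, recognition of the residual isogeny as Hurwitz--Maass plus integral, and bookkeeping that the algorithm's recursion visits every PPAS that arises — completes the completeness argument, and with soundness and termination this proves correctness.
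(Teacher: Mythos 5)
Your overall strategy (termination, soundness, then completeness by peeling off the inert type via \cref{prop:factor-inert}, stripping the cyclic type via \cref{cor:factor-cyclic}, and recognizing the residue as a Hurwitz--Maass isogeny) is the same as the paper's, and your treatment of the final case --- using that $\tor(f)^2\cyc(f)$ is trivial in $\Cl^+(F)$ and that $\splitker{f}/A[\tor(f)]$ supplies the required $k$-rational maximal cyclic subgroup, so that a failure of step~(2) forces $\cyc(f)$ to be trivial and $\ker f = A[\tor(f)]$ --- is in fact spelled out more carefully than in the paper, which simply asserts that some $\frakq\in Q$ divides $\cyc(f)$.

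The one genuine slip is your induction measure. In \cref{cor:factor-cyclic} one has $f\circ[n] = h\circ g$ with $\deg(g) = N_{F/\Q}(\frakq)\,n^2$ and $\deg(f\circ[n]) = n^4\deg(f)$, so $\deg(h) = n^2\deg(f)/N_{F/\Q}(\frakq)$, which can well exceed $\deg(f)$ when the auxiliary ideal $\frakc$ is nontrivial (e.g.\ $N_{F/\Q}(\frakc)^2 > N_{F/\Q}(\frakq)$). So ``$h$ has strictly smaller degree'' is false in general and the induction as you state it does not close. What does decrease is the cyclic type: $\cyc(h) = \cyc(f)/\frakq$ strictly divides $\cyc(f)$ (the composition with $[n]$ only enlarges the torsion type, by $\frakc\overline{\frakc}$). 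Replacing induction on $\deg f$ by induction on $N_{F/\Q}(\cyc(f))$ (or on the number of prime factors of $\cyc(f)$, counted with multiplicity) repairs the argument and is what the paper's phrase ``after a finite number of such steps'' amounts to.
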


\begin{proof}
  First, the algorithm terminates, as $\Isog(A)$ is finite and step~(2) is
  applied at most once on each PPAS in this set.

  Let~$A'\in \Isog(A)$, and fix an isogeny $f:A\to A'$. By
  \cref{cor:max-rm-decomposition}, \cref{prop:factor-inert} and step (4), we
  can assume that $A'$ has RM by $\Z_F$ and~$f$ has trivial inert type. If
  $\cyc(f)$ is nontrivial, then when we apply step (2) to~$A$, we find
  $\frakq\in Q$ dividing $\cyc(f)$. Choosing $\frakc$ as in step~(2), by
  \cref{cor:factor-cyclic}, we find a decomposition $f\circ[n] = g\circ h$
  where $h$ has cyclic type $\cyc(f)/\frakq$, and moreover step (2) has been
  applied to the domain of~$h$. After a finite number of such steps, we reduce
  to the case where the cyclic type of~$f$ is trivial. Then $f$ is a
  Hurwitz--Maass isogeny. Since step~(2) adds the Hurwitz--Maass isogeny class
  of any PPAS it finds,~$A'$ will indeed be part of~$S$.
\end{proof}

In order to actually implement \cref{algo:rm-class}, we have to do
more. Computing $Q$ in step (2), and $L_1,L_2$ in step (4), will rely on
Dieulefait's criteria. Computing isogenies will rely on analytic methods as in
the typical case \cite{g2classes}.

\end{document}